\title{Bijections on rooted trees with fixed size of maximal decreasing subtrees}
\author{Jang Soo Kim}
\email{kimjs@math.umn.edu}
\subjclass[2000]{05A15, 05A05}
\keywords{rooted trees, maximal decreasing subtrees}
\date{\today}
\newtheorem{thm}{Theorem}[section]
\newtheorem{lem}[thm]{Lemma}
\newtheorem{cor}[thm]{Corollary}
\theoremstyle{definition}
\newtheorem{example}{Example}
\theoremstyle{remark}
\DeclareMathOperator{\MD}{MD}
\newcommand\A{\mathcal{A}}
\newcommand\F{\mathcal{F}}
\newcommand\X{\mathcal{X}}
\newcommand\Y{\mathcal{Y}}
\newcommand\FF{\mathfrak{F}}
\newcommand\TT{\mathfrak{T}}
\renewcommand\AA{\mathfrak{A}}
\newcommand\BB{\mathfrak{B}}
\newcommand\CC{\mathfrak{C}}
\def\sch.{Schr{\"o}der}
\def\comma{\raisebox{-4pt}{\mbox{ ,\quad}}}
\begin{document}

\begin{abstract}
  Seo and Shin showed that the number of rooted trees on $[n+1]$ such that the
  maximal decreasing subtree with the same root has $k+1$ vertices is equal to
  the number of functions $f:[n]\to[n]$ such that the image of $f$ contains
  $[k]$. We give a bijective proof of this theorem.
\end{abstract}

\maketitle

\section{Introduction}\label{sec:introduction}

A \emph{tree} on a finite set $X$ is a connected acyclic graph with vertex set
$X$. A \emph{rooted tree} is a tree with a distinguished vertex called a
\emph{root}. It is well-known that the number of rooted trees on
$[n]=\{1,2,\dots,n\}$ is $n^{n-1}$, see \cite[5.3.2~Proposition]{EC2}.

Suppose $T$ is a rooted tree with root $r$. For a vertex $v\ne r$ of $T$ there
is a unique path $(u_1,u_2,\dots,u_i)$ from $r=u_1$ to $v=u_i$. Then $u_{i-1}$
is called the \emph{parent} of $v$, and $v$ is called a \emph{child} of
$u_{i-1}$.  For two vertices $u$ and $v$, we say that $u$ is a \emph{descendant}
of $v$ if the unique path from $r$ to $u$ contains $v$. Note that every vertex
is a descendant of itself. A \emph{leaf} is a vertex with no children. A rooted
tree is \emph{decreasing} if every nonleaf is greater than its children.  The
\emph{maximal decreasing subtree} of $T$, denoted $\MD(T)$, is the maximal
subtree such that it has the same root as $T$ and it is decreasing. If the root
of $T$ has no smaller children, $T$ is called \emph{minimally rooted}. 

The notion of maximal decreasing subtree was first appeared in \cite{Chauve2000}
in order to prove the following theorem.
\begin{thm}\cite[Theorem~2.1]{Chauve2000}\label{thm:CDG}
The number of rooted trees on $[n+1]$
such that the root has $\ell$ smaller children equals $\binom{n}{\ell} n^{n-\ell}$.
\end{thm}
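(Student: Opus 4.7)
I would attack this bijectively. Since
\[
\sum_{\ell=0}^{n} \binom{n}{\ell} n^{n-\ell} = (n+1)^n
\]
is the total number of rooted trees on $[n+1]$, the theorem is a refinement of Cayley's formula: partition these trees by $s(T)$, the number of smaller children of the root, and match class sizes with $\binom{n}{\ell} n^{n-\ell}$. Equivalently, one wants $\sum_T x^{s(T)} = (n+x)^n$.

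The plan is to construct a bijection between rooted trees $T$ on $[n+1]$ with $s(T) = \ell$, and pairs $(S, f)$ where $S \in \binom{[n]}{\ell}$ and $f : [n] \setminus S \to [n]$. Given $T$ with root $r$ and smaller children $c_1 < \cdots < c_\ell$, I would take $S = \{c_1, \ldots, c_\ell\}$, which is a subset of $[n]$ since $c_i < r \le n+1$. To produce $f$, I would sever the $\ell$ edges $r c_i$, obtaining a rooted forest on $[n+1]$ with components rooted at $r, c_1, \ldots, c_\ell$, and then encode this forest by a Pr\"ufer-style sequence to yield a function from the $n - \ell$ non-root vertices into $[n]$.

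The inverse would read off $S$ from the pair, decode $f$ into a rooted forest on $[n+1]$ whose root set is $S \cup \{r\}$ for some $r > \max(S)$ recovered from the code, and then reattach each $c \in S$ as a child of $r$. One must verify that the resulting tree has exactly $\ell$ smaller children of $r$, meaning no vertex $v < r$ with $v \notin S$ becomes a child of $r$ during decoding.

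The crux, and main obstacle, is designing the Pr\"ufer-style code so that the constraint ``no unintended smaller child of $r$'' corresponds to an \emph{unrestricted} function on $[n]\setminus S$. A natural device is to forbid Pr\"ufer entries at positions $v < r$ from taking the value $r$, then use a canonical relabeling of the forbidden positions to convert the restricted count into $n^{n-\ell}$ (independent of $r$). Making this relabeling both root-independent and honestly bijective is where the real work lies.
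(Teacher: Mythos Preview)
Your proposal is an honest plan, not a proof: you correctly isolate the obstacle (encoding the ``no further smaller child of $r$'' constraint by an \emph{unrestricted} function on $n-\ell$ letters) and you do not resolve it. That gap is genuine. After severing the edges $rc_i$, the component containing $r$ is a \emph{minimally rooted} tree on some set $A\subset[n+1]$, and the set $A$ (in particular which element plays the role of $r$, and whether $n+1\in A$) is not fixed in advance. A naive Pr\"ufer code of the forest produces a parent function whose domain and forbidden values both depend on $r$; the ``canonical relabeling'' you allude to must simultaneously absorb the choice of $r$ and the restriction, and you have not specified one. So as written this is a sketch with the key step missing.

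The paper's proof takes a different route and does not try to code directly to functions. It first reinterprets $\binom{n}{\ell}n^{n-\ell}$ (via Pr\"ufer) as the number of forests on $[n+1]$ with $\ell+1$ trees in which $n+1$ is a leaf, reducing the problem to a bijection between trees whose root has $\ell$ smaller children and such forests. It then uses the bijection $\alpha$ of Section~\ref{sec:bijections}: $\alpha$ sends a tree $T$ to a cycle $[T_0,\dots,T_k]$ of minimally rooted trees, and the smaller children of the root of $T$ are exactly the left-to-right maxima of the root sequence $r_1,\dots,r_k$. A Foata-type rearrangement (left-to-right maxima $\leftrightarrow$ cycle starts) then converts the cycle into a set of $\ell$ smaller cycles, to each of which $\alpha^{-1}$ is applied; together with the tree containing $n+1$ this yields a forest with $\ell+1$ components in which $n+1$ is a leaf. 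The minimally-rooted constraint is thus handled structurally by $\alpha$ rather than by a Pr\"ufer-level relabeling.

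If you want to salvage your direct approach, the tool you are missing is essentially Lemma~\ref{lem:CDG}: the bijection $\phi$ sends minimally rooted trees on $A$ to trees on $A$ in which $\max(A)$ is a leaf. Applying $\phi$ to your $r$-component replaces the awkward ``no smaller child of $r$'' condition by the uniform ``maximum is a leaf'' condition, which is what makes a clean Pr\"ufer encoding possible.
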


Recently, maximal decreasing subtrees reappeared in the study of a certain free
Lie algebra over rooted trees by Bergeron and Livernet \cite{Bergeron2010}. Seo
and Shin \cite{SeoShin} proved some enumeration properties of rooted trees with
fixed size of maximal decreasing subtrees.



We denote by $\TT_{n,k}$ the set of rooted trees on $[n+1]$ whose maximal
decreasing subtrees have $k+1$ vertices. Let $\FF_{n,k}$ denote the set of
functions $f:[n]\to[n]$ such that $[k]\subset f([n])$, where $[0]=\emptyset$.
Equivalently, $\FF_{n,k}$ is the set of words on $[n]$ of length $n$ such that
each of $1,2,\dots,k$ appears at least once.  Using the Pr\"ufer code one can
easily see that $\FF_{n,k}$ is in bijection with the set of rooted trees on
$[n+1]$ such that $n+1$ is a leaf and $1,2,\dots,k$ are nonleaves.  Thus, we
will consider $\FF_{n,k}$ as the set of such trees.

Seo and Shin \cite{SeoShin} proved the following theorem. 

\begin{thm}\cite{SeoShin}\label{thm:SeoShin}
We have
\[
|\TT_{n,k}| = |\FF_{n,k}|.
\]
\end{thm}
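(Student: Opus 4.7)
The plan is to construct an explicit bijection $\Phi : \TT_{n,k} \to \FF_{n,k}$ by decomposing each tree via its maximal decreasing subtree.

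I would first establish that $\TT_{n,k}$ is in natural bijection with the set of quadruples $(S, D, \mathcal{G}, \phi)$ in which $S \subseteq [n+1]$ has size $k+1$, $D$ is a decreasing rooted tree on $S$ (necessarily rooted at $\max S$, which is also the root of the original tree), $\mathcal{G}$ is a rooted forest on $[n+1] \setminus S$, and $\phi$ attaches each root $u$ of a tree of $\mathcal{G}$ to a vertex $\phi(u) \in S$ with $\phi(u) < u$. This decomposition is forced by the definition of $\MD(T)$: setting $D = \MD(T)$ and $S = V(D)$, the remaining structure is exactly a forest of ``ascending'' subtrees whose roots attach to $S$ via edges going from a small parent to a larger child.

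Second, I would produce an analogous decomposition of $\FF_{n,k}$. For $F \in \FF_{n,k}$, I would construct a canonical skeleton supported on the distinguished labels $\{1, \ldots, k, n+1\}$ together with attached subtrees on $[n] \setminus [k]$, and show that the resulting data is parametrized by quadruples of the same form as on the $\TT_{n,k}$ side, after identifying $S$ with $\{1, \ldots, k, n+1\}$ via the unique order-preserving bijection (which in particular sends $\max S$ to $n+1$). The map $\Phi$ would then be obtained by matching the two quadruple decompositions. To ensure that the image actually lies in $\FF_{n,k}$, a local modification --- for instance, reversing the orientation of the path from the image of $\max S$ to a canonically chosen leaf of the skeleton --- is applied so that $n+1$ becomes a leaf of $\Phi(T)$ while $1, \ldots, k$ become nonleaves.

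The main obstacle is this second phase: identifying the correct canonical skeleton inside $F$. The defining condition on $F$ is phrased in terms of leaves and nonleaves rather than in terms of a subtree, so there is no \emph{a priori} intrinsic substructure playing the role of $\MD(T)$. I expect the right choice to emerge from analyzing the parent function of $F$ restricted to $\{1, \ldots, k, n+1\}$, perhaps by following a greedy rule that traces a decreasing path starting from the root of $F$ and pivoting at vertices of $[k]$. Once the skeleton is correctly pinned down, the matching of decompositions and the explicit description of $\Phi^{-1}$ should both be mechanical.
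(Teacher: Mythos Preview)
Your decomposition of $\TT_{n,k}$ into quadruples $(S,D,\mathcal{G},\phi)$ is correct, but the second half of the plan is not a proof: you explicitly defer the key step --- locating a canonical skeleton inside an arbitrary $F\in\FF_{n,k}$ --- to a hope that ``the right choice will emerge.'' It does not emerge in the way you sketch. The condition defining $\FF_{n,k}$ is that each of $1,\dots,k$ has at least one child; this is a covering constraint on the set $[k]$, not a subtree constraint, and there is no natural induced tree on $\{1,\dots,k,n+1\}$ playing the role of $\MD(T)$. More concretely, your relabelling idea cannot work as stated: the attachment condition $\phi(u)<u$ depends on how $S$ is interleaved with its complement inside $[n+1]$, so after sending $S$ order-isomorphically to $\{1,\dots,k,n+1\}$ and its complement to $\{k+1,\dots,n\}$ the inequality $\phi(u)<u$ does not transform into any uniform condition on the relabelled data. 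Different $(k+1)$-sets $S$ with the same $(D,\mathcal{G})$ yield genuinely different numbers of valid attachments $\phi$, and this information is destroyed by your identification; the vague ``local modification'' you mention cannot recover it.

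For comparison, the paper does not attempt a direct match. It factors the bijection through three intermediate sets $\AA_{n,k}$, $\BB_{n,k}$, $\CC_{n,k}$: first $\MD(T)$ is converted, via depth-first search, into a cycle of $k+1$ minimally rooted trees on $[n+1]$; then the cycle is opened to an ordered forest on $[n]$ whose last $k$ trees are minimally rooted; then an iterative regrouping (the map $\gamma$, built from a bijection on ``balanced pairs'') repackages this as a sequence of $k$-good ordered forests; finally these are reassembled into a tree in which the vertices of $[k]$ are forced to be nonleaves and $n+1$ is a leaf. The passage through minimally rooted trees and $k$-good forests is precisely the mechanism that exchanges the local inequalities $\phi(u)<u$ for the global covering condition on $[k]$, and that exchange is the missing idea in your plan.
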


In \cite{SeoShin} they showed Theorem~\ref{thm:SeoShin} by finding formulas for
both sides and computing the formulas. In this paper we provide a bijective
proof Theorem~\ref{thm:SeoShin}, which consists of several bijections between
certain objects, see Theorem~\ref{thm:main}. In order to state the objects in
Theorem~\ref{thm:main} we need the following definitions.

An \emph{ordered forest} on a finite set $X$ is an ordered tuple of rooted trees
whose vertex sets form a partition of $X$.  We say that an ordered forest
$(T_0,T_1,\dots,T_\ell)$ is \emph{$k$-good} if it satisfies the following
conditions:
\begin{enumerate}
\item If $\ell=0$, then $T_0$ has only one vertex $v$ and we have $v\in [k]$.
\item If $\ell\geq1$, then $T_1,T_2,\dots,T_\ell$ are minimally rooted, and the
  number of vertices of $T_0,T_1,\dots,T_i$ contained in $[k]$ is at least $i+1$
  when $i=0,1,2,\dots,\ell-1$, and equal to $\ell$ when $i=\ell$.
\end{enumerate}

We now state the main theorem of this paper.

\begin{thm}\label{thm:main}
  The following sets have the same cardinality:
  \begin{enumerate}
\item the set $\TT_{n,k}$ of rooted trees on $[n+1]$ whose maximal decreasing
subtrees have $k+1$ vertices,
\item the set $\AA_{n,k}$ of cycles of $k+1$ minimally rooted trees such that the
  vertex sets of the trees form a partition of $[n+1]$,
\item the set $\BB_{n,k}$ of ordered forests on $[n]$ such that the last $k$ trees
  are minimally rooted, 
\item the set $\CC_{n,k}$ of sequences of $k$-good ordered forests such that the
  vertex sets of the forests form a partition of $[n]$,
\item the set $\FF_{n,k}$ of rooted trees on $[n+1]$ such that $n+1$ is a leaf,
  and $1,2,\dots,k$ are nonleaves. 
\end{enumerate}
\end{thm}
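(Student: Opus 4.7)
The plan is to construct four bijections in sequence,
\[
\TT_{n,k}\xrightarrow{\phi_1}\AA_{n,k}\xrightarrow{\phi_2}\BB_{n,k}\xrightarrow{\phi_3}\CC_{n,k}\xrightarrow{\phi_4}\FF_{n,k},
\]
each shifting the combinatorial information in a controlled way. Composed, they yield $|\TT_{n,k}|=|\FF_{n,k}|$, reproving Theorem~\ref{thm:SeoShin}, and simultaneously identify all five sets.

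For $\phi_1$, I decompose a rooted tree $T\in\TT_{n,k}$ at its maximal decreasing subtree $M=\MD(T)$. For each vertex $v\in V(M)$, form the cluster $C_v$ consisting of $v$ together with every maximal subtree of $T$ hanging off $v$ through a child larger than $v$ (equivalently, through an edge not in $M$). Each $C_v$ is minimally rooted, since the only children of $v$ in $C_v$ are larger than $v$, and the $k+1$ clusters $\{C_v\}_{v\in V(M)}$ partition $[n+1]$. Because $M$ is a decreasing tree on $V(M)$ rooted at $\max V(M)$, and there are exactly $k!$ such decreasing trees, equal to the number of cyclic orderings of $V(M)$, I invoke a standard bijection between the two families (for example, via a depth-first traversal visiting children in decreasing order) to promote $M$ to a cyclic ordering of the $C_v$'s, producing an element of $\AA_{n,k}$.

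For $\phi_2$, I rotate the cycle so that the tree $T^{*}$ containing $n+1$ appears first and then remove $n+1$ from $T^{*}$: when $T^{*}=\{n+1\}$ this simply deletes $T^{*}$; otherwise, removing $n+1$ splits $T^{*}$ into a minimally rooted ``upper'' component (containing the root $r^{*}$ of $T^{*}$) and several subtrees rooted at the children of $n+1$. I arrange these pieces into an ordered forest so that the position of $n+1$ in $T^{*}$ (in particular, its parent) can be recovered, then concatenate with the remaining $k$ cycle trees; the last $k$ trees of the result are minimally rooted, placing the forest in $\BB_{n,k}$. For $\phi_3$, I partition the ordered forest $(F_{0},F_{1},\dots,F_{m})$ into consecutive blocks, each of which is a $k$-good ordered forest: the segmentation is dictated by a cycle-lemma-style argument that tracks, within each block, the running count of $[k]$-vertices and closes the block at the first position at which the ballot-type condition defining $k$-goodness is met, so that the min-rootedness requirements on $T_{1},\dots,T_{\ell}$ line up with the min-rooted trees present in $\BB_{n,k}$.

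Finally, for $\phi_4$, I build a rooted tree on $[n+1]$ from a sequence of $k$-good sub-forests so that $n+1$ becomes a leaf, using the fact that the leading tree of each $k$-good block supplies $[k]$-vertices to serve as parents of the roots coming from subsequent trees, thereby forcing every $v\in[k]$ to be a non-leaf. The most delicate steps are $\phi_{2}$ and $\phi_{3}$: in $\phi_{2}$ the parent of $n+1$ in $T^{*}$ must be faithfully encoded by the ordering of the pieces obtained after removing $n+1$, while in $\phi_{3}$ one must check that the greedy segmentation always terminates correctly and that each produced block satisfies both the ballot inequalities on $[k]$-vertex counts and the min-rooted constraint on its non-leading trees. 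Once these two bijections are pinned down, $\phi_{1}$ reduces to the classical decreasing-tree-to-cyclic-order correspondence and $\phi_{4}$ is an assembly procedure guided entirely by the $k$-good structure.
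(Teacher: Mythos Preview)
Your overall architecture matches the paper's: four bijections $\TT_{n,k}\to\AA_{n,k}\to\BB_{n,k}\to\CC_{n,k}\to\FF_{n,k}$, and your $\phi_1$ and $\phi_4$ are essentially the paper's $\alpha$ and $\zeta$. However, the two steps you yourself flag as ``delicate'' both contain genuine gaps.

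For $\phi_2$, simply deleting $n+1$ from $T^{*}$ and ``arranging the pieces'' cannot work as stated. The pieces you obtain are an upper tree $U$ (containing the old root) together with an \emph{unordered} collection of subtrees hanging from $n+1$; to rebuild $T^{*}$ you must also record the parent $p$ of $n+1$, which can be \emph{any} vertex of $U$. An ordering of the pieces carries only a permutation's worth of information and cannot encode the choice of $p\in V(U)$. The paper resolves this with a nontrivial two-step map: first the Chauve--Dulucq--Guibert bijection $\phi$ (minimally rooted trees on $A$ $\leftrightarrow$ trees on $A$ with $\max A$ a leaf), then the path-decomposition $\rho$ (such trees $\leftrightarrow$ ordered forests on $A\setminus\{\max A\}$). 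Your sketch supplies neither ingredient.

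For $\phi_3$, segmenting the ordered forest into \emph{consecutive} blocks cannot produce $k$-good pieces in general. A $k$-good block $(T_0,T_1,\dots,T_\ell)$ with $\ell\ge 1$ requires $T_1,\dots,T_\ell$ to be minimally rooted, but in an element of $\BB_{n,k}$ only the \emph{last} $k$ trees are guaranteed minimally rooted, so a consecutive block starting among the first $\ell$ trees will typically fail. Concretely, for $n=4$, $k=2$, the forest $(\{3\},\{4\},T)$ with $T$ the tree rooted at $1$ with child $2$ lies in $\BB_{4,2}$, yet no terminal consecutive block containing $T$ is $k$-good. The paper's map does \emph{not} segment consecutively: it maintains a pair $(\X,\Y)$ (the ``front'' trees and the ``back'' minimally rooted trees), and at each step peels off one tree from $\X$ together with a prefix of $\Y$; moreover, once $\X$ is exhausted but $\Y$ is not, it invokes $\phi$ and $\rho$ again together with an order-preserving relabeling to manufacture a new ``front''. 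Your cycle-lemma intuition captures the ballot condition but misses both the interleaving of $\X$ with $\Y$ and this relabeling step, without which the process can stall.
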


In Section~\ref{sec:bijections} we find bijections proving
Theorem~\ref{thm:main}. The ideas in the bijections have some applications.  In
Section~\ref{sec:some-properties-tt_n} we find a bijective proof of the
following identity, which (finding a bijective proof) is stated as an open
problem in \cite{SeoShin}:
\[
    \sum_{k\ge0} \frac{1}{k} |\TT_{n,k}| = n^n.
\]
In Section~\ref{sec:anoth-proof-theor}, we gives another bijective proof of
Theorem~\ref{thm:CDG}.

From now on all trees in this paper are rooted trees.

\section{Bijections}\label{sec:bijections}

In this section we will find four bijections to prove Theorem~\ref{thm:main}. We
assume that $n$ and $k$ are fixed nonnegative integers.  We will write cycles
using brackets to distinguish them from sequences. For instance, $[a_1,a_2,a_3]$
is a cycle and $(a_1,a_2,a_3)$ is a sequence, thus $[a_1,a_2,a_3]=[a_2,a_3,a_1]$
and $(a_1,a_2,a_3)\ne(a_2,a_3,a_1)$. For a tree or a forest $T$, we denote by
$V(T)$ the set of vertices in $T$.

\subsection{A bijection $\alpha:\TT_{n,k}\to\AA_{n,k}$}

We will explain the map $\alpha$ by an example.  Let $T\in\TT_{19,7}$ be the
following tree.
\[
\pstree{\TR{16}}{
  \pstree{\TR{13}}{
    \TR{17}
    \pstree{\TR{8}}{
      \TR{19}
      \TR{18}
    }
  }
  \pstree{\TR{12}}{
    \pstree{\TR{11}}{
      \pstree{\TR{10}}{
        \pstree{\TR{15}}{
          \pstree{\TR{1}}{
            \TR{3}
            \pstree{\TR{20}}{
              \TR{4}
            }
          }
        }
      }
    }
    \pstree{\TR{7}}{
      \pstree{\TR{14}}{
        \TR{9} \TR{2}
      }
    }
    \pstree{\TR{5}}{
      \TR{6}
    }
  }
}
\]
Then we can decompose $T$ as follows:
\begin{equation}
  \label{eq:1}
T \Leftrightarrow
\left(
\raisebox{1.5cm}{
\pstree{\TR{16}}{
  \pstree{\TR{13}}{
    \TR{8}
  }
  \pstree{\TR{12}}{
    \pstree{\TR{11}}{
      \TR{10}
    }
    \TR{7}
    \TR{5}
  }
} 
\comma
\pstree{\TR{13}}{
  \TR{17}
}
\comma
\pstree{\TR{10}}{
  \pstree{\TR{15}}{
    \pstree{\TR{1}}{
      \TR{3}
      \pstree{\TR{20}}{
        \TR{4}
      }
    }
  }
}
\comma
    \pstree{\TR{8}}{
      \TR{19}
      \TR{18}
    }
\comma
\pstree{\TR{7}}{\pstree{\TR{14}}{\TR{9} \TR{2}}} \comma
\pstree{\TR{5}}{\TR{6}} 
}
\right),
\end{equation}
where the first tree is $\MD(T)$, and the rest are the trees with more than one
vertex in the forest obtained from $T$ by removing the edges in $\MD(T)$.  We
now construct a cycle $C$ corresponding to $\MD(T)$ as follows. First, let $C$
be the cycle containing only the maximal vertex $m$, which is the root of
$\MD(T)$. For each remaining vertex $v$, starting from the largest vertex to the
smallest vertex, we insert $v$ in $C$ after the parent of $v$. In the current
example, we get the cycle $[16, 12, 5, 7, 11, 10, 13, 8]$. It is easy to see
that this process is invertible. In fact this is equivalent to the well-known
algorithm called the depth-first search (preorder).

For each element $v$ except the largest element in this cycle, if there is a
tree with root $v$ in \eqref{eq:1} replace $v$ with the tree. We then define
$\alpha(T)$ to be the resulting cycle. It is not hard to see that $\alpha$ is a
bijection.  In the current example, we have
\begin{equation}
  \label{eq:2}
\alpha(T) =  \left[
\raisebox{1.5cm}{
\TR{16} \comma
\TR{12} \comma
\pstree{\TR{5}}{\TR{6}} \comma
\pstree{\TR{7}}{\pstree{\TR{14}}{\TR{9} \TR{2}}} \comma
\TR{11}
\comma
\pstree{\TR{10}}{
  \pstree{\TR{15}}{
    \pstree{\TR{1}}{
      \TR{3}
      \pstree{\TR{20}}{
        \TR{4}
      }
    }
  }
}
\comma
\pstree{\TR{13}}{
  \TR{17}
}
\comma
    \pstree{\TR{8}}{
      \TR{19}
      \TR{18}
    }
}
\right] \in \BB_{19,7}.
\end{equation}

\subsection{A bijection $\beta:\AA_{n,k}\to\BB_{n,k}$}

In order to define the map $\beta$ we need two bijections $\phi$ and $\rho$ in
the following two lemmas. These bijections will also be used in other
subsections.

\begin{lem}\cite{Chauve2000}\label{lem:CDG}
  Let $A\subset [n]$.  There is a bijection $\phi$ from the set of minimally
  rooted trees on $A$ to the set of rooted trees on $A$ such that $\max(A)$ is a
  leaf.
\end{lem}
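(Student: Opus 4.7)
The plan is to construct $\phi$ explicitly by performing a surgery along the path from the root to $m := \max(A)$, using the minimally rooted hypothesis to guarantee invertibility. As a sanity check, write $a = |A|$. A rooted tree on $A$ with $m$ a leaf is determined by a rooted tree on $A\setminus\{m\}$ together with a parent for $m$, giving $(a-1)^{a-2}\cdot(a-1) = (a-1)^{a-1}$; on the other side, Theorem~\ref{thm:CDG} with $\ell=0$ (after a harmless relabeling to $[a]$) says there are exactly $(a-1)^{a-1}$ minimally rooted trees on $A$. So the two sets do have the same cardinality, and the task is to make the equality bijective.

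For the construction, when $|A|=1$ I would set $\phi(T) = T$. For $|A|\geq 2$, given a minimally rooted $T$ with root $r$, let the unique path from $r$ to $m$ be $r = u_1, u_2, \dots, u_i = m$. The minimally rooted condition yields the key inequality $u_1 < u_2$. I would define $\phi(T)$ by a local operation that preserves the hanging subtrees along $u_1,\dots,u_i$ but rewires the path so that $m$ lands as a leaf: a natural first attempt is to delete the edge $(u_{i-1},m)$, reattach each child of $m$ in $T$ to $u_{i-1}$, and then reattach $m$ itself as a leaf of $u_{i-1}$. This produces a rooted tree with $m$ a leaf.

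The main obstacle is invertibility: given the output, one must unambiguously recognize which children of $u_{i-1}$ were originally $m$'s children, and this naive local rule uses the condition $u_1 < u_2$ nowhere. I would therefore replace it by a path-global construction, where the labels $u_1,\dots,u_i$ are cyclically shifted along the path so that the output is forced to have $m$ as a leaf, while the hanging subtrees ride along with their original anchor-labels. On the inverse side, the inequality $u_1 < u_2$ should identify the preimage of $r$ as the unique vertex on the new path satisfying a prescribed minimality condition relative to the next vertex, allowing the original path and hence $T$ to be reconstructed. Verifying that this rule is well-defined, that the inverse truly lands in minimally rooted trees, and that the hanging subtrees are handled consistently is the technical heart of the argument, and I would carry it out by induction on $|A|$ (or, equivalently, on the path length $i$), using the base case $|A|=1$ as the anchor.
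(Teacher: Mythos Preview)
Your proposal does not contain a proof; it contains a plan whose central step is left blank. The sanity count is fine, and you correctly diagnose why the naive ``push $m$'s children onto its parent'' rule is non-invertible. But the replacement you offer---``cyclically shift the labels $u_1,\dots,u_i$ along the path while the hanging subtrees ride along''---is never actually defined. Which cyclic rotate? What happens to the subtrees hanging from $m$? On the inverse side you assert that the single inequality $u_1<u_2$ will pin down the preimage of the root, but along a path of length $i$ there can be many indices $j$ with $u_j<u_{j+1}$, so ``the unique vertex satisfying a prescribed minimality condition relative to the next vertex'' is not unique without further specification. Saying you would verify this by induction on $|A|$ does not help until the map itself is written down.

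The paper's construction is genuinely different from anything path-based: it ignores the $r$--to--$m$ path and works instead with the subtree $P$ hanging below $m$. One computes the maximal decreasing subtree $\MD(P)$, performs an order-preserving relabeling of $V(\MD(P))$ that trades $m$ out for $r$, and then re-grafts the pieces of $P\setminus\MD(P)$ and the remainder $Q=T\setminus P$ at their (possibly relabeled) anchors. The invertibility comes not from a single inequality along a path but from the fact that $\MD$ of the image can be read off and the relabeling is monotone, so the original $\MD(P)$ and hence $P$, $Q$, and $T$ can be reconstructed. If you want to salvage a path-based argument, you would need to specify a canonical rotation (for instance, rotate so that a particular record structure holds) and prove uniqueness; as written, that argument is missing.
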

\begin{proof}
  We will briefly describe the bijection $\phi$. See \cite{Chauve2000} for the
  details. Consider a minimally rooted tree $T$ on $A$ with root $r$. Let $P$ be
  the subtree of $T$ rooted at $\max(A)$ containing all descendants of
  $\max(A)$, and let $Q$ be the tree obtained from $T$ by deleting $P$
  (including $\max(A)$).  We now consider the forest obtained from $P$ by
  removing all edges of $\MD(P)$. Suppose this forest has $\ell$ trees
  $T_1,T_2,\dots,T_\ell$ rooted at, respectively, $r_1,r_2,\dots,r_\ell$.  If
  $V(\MD(P)) = \{u_1<u_2<\dots<u_t\}$ and $V(\MD(P))\setminus\{\max(A)\} \cup
  \{r\} = \{v_1<v_2<\dots<v_t\}$, let $T'$ be the tree obtained from $\MD(P)$ by
  replacing $u_i$ with $v_i$ for all $i$. Then $\phi(T)$ is the tree obtained
  from $T'$ by attaching $T_i$ at $r_i$ for $i=1,2,\dots,\ell$ and attaching $Q$
  at $r$.
\end{proof}

\begin{lem}\label{lem:leaf}
  Let $A\subset[n]$.  There is a bijection $\rho$ from the set of rooted trees
  on $A$ such that $\max(A)$ is a leaf to the set of ordered forests on
  $A\setminus\{\max(A)\}$.
\end{lem}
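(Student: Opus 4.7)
The plan is to exploit the unique path from the root to the distinguished leaf $M:=\max(A)$ and slice the tree along this path into an ordered tuple of subtrees. Given $T$ on $A$ with $M$ a leaf, let $(u_0,u_1,\dots,u_\ell)$ be the path in $T$ with $u_0$ the root of $T$ and $u_\ell=M$. For each $i=0,1,\dots,\ell-1$, define $T_i$ to be the connected component containing $u_i$ in the graph obtained from $T$ by deleting the edge $\{u_i,u_{i+1}\}$ (and then deleting the subtree rooted at $u_{i+1}$ from further consideration); rooted at $u_i$, this is a rooted tree. Set $\rho(T)=(T_0,T_1,\dots,T_{\ell-1})$. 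In the degenerate case $A=\{M\}$ we have $\ell=0$ and $\rho(T)$ is the empty tuple, which is the unique ordered forest on $\emptyset$.

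To see that $\rho(T)$ is a well-defined ordered forest on $A\setminus\{M\}$, note that the subtrees of $T$ rooted at $u_0,u_1,\dots,u_\ell$ form a strictly decreasing nested sequence of vertex sets (as $i$ increases), so the $V(T_i)$ for $0\le i\le \ell-1$ are pairwise disjoint, and their union is $V(T)\setminus\{u_\ell\}=A\setminus\{M\}$. Each $T_i$ is itself a rooted tree on its vertex set, so $(T_0,\dots,T_{\ell-1})$ is indeed an ordered forest on $A\setminus\{M\}$.

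For the inverse, given an ordered forest $(T_0,T_1,\dots,T_{\ell-1})$ on $A\setminus\{M\}$ with roots $r_0,r_1,\dots,r_{\ell-1}$, I would reconstruct $T$ as the rooted tree on $A$ with root $r_0$ obtained by taking the disjoint union of the $T_i$, attaching $r_{i+1}$ as a new child of $r_i$ for $0\le i\le \ell-2$, and finally attaching $M$ as a new child of $r_{\ell-1}$; in the empty case $\ell=0$ the reconstructed tree is the single vertex $M$. The vertex $M$ is a leaf of $T$ by construction, and the path from the root of $T$ to $M$ is exactly $(r_0,r_1,\dots,r_{\ell-1},M)$, which makes it immediate that applying $\rho$ to this $T$ recovers the original tuple, and vice versa. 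There is no real obstacle here; the only mild bookkeeping is the boundary case $\ell=0$ and checking that the vertices of the $T_i$ truly partition $A\setminus\{M\}$, which follows from the nested-subtree observation above.
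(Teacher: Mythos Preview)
Your proposal is correct and follows essentially the same approach as the paper: slice the tree along the unique root-to-$\max(A)$ path, record the resulting subtrees (rooted at the path vertices) as an ordered tuple, and invert by chaining the roots together and appending $\max(A)$ as a leaf. The only differences are cosmetic---you index the path from $0$ and give a more detailed verification of disjointness and of the inverse, whereas the paper indexes from $1$ and leaves invertibility as ``easy to see.''
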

\begin{proof}
  Suppose $T$ is a rooted tree on $A$ such that $\max(A)$ is a leaf.  Let
  $r=r_1, r_2,\dots, r_{\ell+1}=\max(A)$ be the unique path from the root $r$ of
  $T$ to the leaf $\max(A)$.  Let $R_1,R_2,\dots,R_{\ell}$ be the rooted trees
  with roots $r_1,r_2,\dots,r_{\ell}$ respectively in the forest obtained from
  $T$ by removing the edges $r_1r_2,r_2r_3,\dots,r_{\ell}r_{\ell+1}$ and the
  vertex $r_{\ell+1}=\max(A)$. We define $\rho(T)=(R_1,R_2,\dots,R_{\ell})$. It
  is easy to see that $\rho$ is a desired bijection.
\end{proof}

Let $[T_0,T_1,\dots,T_k]\in \AA_{n,k}$. Since $[T_0,T_1,\dots,T_k]$ is a cycle,
we can assume that $n+1\in T_0$.  By Lemmas~\ref{lem:CDG} and \ref{lem:leaf},
the vertex $n+1$ in $\phi(T_0)$ is a leaf, and $\rho(\phi(T_0)) =
(R_1,R_2,\dots,R_{\ell})$ is an ordered forest on $V(T_0)\setminus\{n+1\}$. We
define $\beta([T_0,T_1,\dots,T_k])= (R_1,R_2,\dots,R_{\ell},
T_1,T_2,\dots,T_k)$.  Since both $\phi$ and $\rho$ are invertible, $\beta$ is a
bijection.

\begin{example}
  Let $\F$ be the cycle in \eqref{eq:2}. Then we can write $\F$ as
\[
\F = \left[
\raisebox{1.5cm}{
\pstree{\TR{10}}{
  \pstree{\TR{15}}{
    \pstree{\TR{1}}{
      \TR{3}
      \pstree{\TR{20}}{
        \TR{4}
      }
    }
  }
}
\comma
\pstree{\TR{13}}{
  \TR{17}
}
\comma
    \pstree{\TR{8}}{
      \TR{19}
      \TR{18}
    }
\comma
\TR{16} \comma
\TR{12} \comma
\pstree{\TR{5}}{\TR{6}} \comma
\pstree{\TR{7}}{\pstree{\TR{14}}{\TR{9} \TR{2}}} \comma
\TR{11}
}
\right]. 
\]
Then
\[
(\rho\circ\phi)
\left(
\raisebox{1.5cm}{
\pstree{\TR{10}}{
  \pstree{\TR{15}}{
    \pstree{\TR{1}}{
      \TR{3}
      \pstree{\TR{20}}{
        \TR{4}
      }
    }
  }
}
}
\right) 
= \rho\left(
\raisebox{1.5cm}{
\pstree{\TR{10}}{
  \pstree{\TR{15}}{
    \pstree{\TR{1}}{
      \TR{3}
      \TR{20}
    }
  }
  \TR{4}
}
}
\right)=
\left(
\raisebox{.4cm}{
\pstree{\TR{10}}{\TR{4}} \comma
\TR{15} \comma
\pstree{\TR{1}}{\TR{3}}
}
\right).
\]
Thus
\[
\beta(\F) = \left(
\raisebox{.8cm}{
\pstree{\TR{10}}{\TR{4}} \comma
\TR{15} \comma
\pstree{\TR{1}}{\TR{3}}  \comma
\pstree{\TR{13}}{
  \TR{17}
}
\comma
    \pstree{\TR{8}}{
      \TR{19}
      \TR{18}
    }
\comma
\TR{16} \comma
\TR{12} \comma
\pstree{\TR{5}}{\TR{6}} \comma
\pstree{\TR{7}}{\pstree{\TR{14}}{\TR{9} \TR{2}}} \comma
\TR{11}
}
\right) \in \BB_{19,7}.
\]

\end{example}

\subsection{A bijection $\gamma:\BB_{n,k}\to\CC_{n,k}$}

We call a vertex with label less than or equal to $k$ a \emph{special vertex}.
For two ordered forests $\X$ and $\Y$ whose vertex sets are disjoint and
contained in $[n]$, the pair $(\X,\Y)$ is called a \emph{balanced pair} if the
trees in $\Y$ are minimally rooted and the number of special vertices in $\X$
and $\Y$ is equal to the number of trees in $\Y$.

For two sets $A$ and $B$, the \emph{disjoint union} $A\uplus B$ is just the
union of $A$ and $B$. However, if we write $A\uplus B$, it is always assumed that
$A\cap B=\emptyset$. 

\begin{lem}
  There is a bijection $f$ from the set of balanced pairs $(\X,\Y)$ to the set
  of pairs $(\A,(\X',\Y'))$ of a $k$-good ordered forest $\A$ and a balanced
  pair $(\X',\Y')$ such that $V(\X)\uplus V(\Y) = V(\A) \uplus V(\X') \uplus
  V(\Y')$.
\end{lem}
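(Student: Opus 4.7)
The plan is to construct the bijection $f$ by identifying, within any non-empty balanced pair $(\X, \Y)$, a canonical initial segment that forms a $k$-good forest $\A$, and designating the rest as the residual balanced pair $(\X', \Y')$. The key observation I would exploit is that the defining partial-sum condition for $k$-goodness is tailor-made for a ``first-passage'' argument on the sequence of special-vertex counts in $(\X, \Y)$.

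First I would take $T_0$ to be the first tree of $\X$, or (when $\X$ is empty) the first tree of $\Y$. Writing $a_i := |V(T_i) \cap [k]|$ for the number of special vertices in each tree, I would then append successive trees $T_1, T_2, \dots$ from the start of $\Y$, and define $\ell$ to be the least index at which the cumulative sum $a_0 + a_1 + \cdots + a_\ell$ equals $\ell$. Since trees of $\Y$ are minimally rooted, $T_1, \dots, T_\ell$ automatically meet the min-rootedness requirement, and the minimality of $\ell$ forces $a_0 + \cdots + a_i \geq i+1$ for $i < \ell$. The case $\ell = 0$ is handled separately: $\A = (T_0)$ is declared $k$-good precisely when $T_0$ is a single vertex in $[k]$, and otherwise we are in the situation $\ell \geq 1$ above. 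The residual pair $(\X', \Y')$ is obtained by removing the used trees from the appropriate sides.

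The main technical points I would verify are: (i) the stopping index $\ell$ always exists, which follows from the balanced-pair identity $\sum_i a_i = |\Y|$ guaranteeing that the partial sums eventually catch up with $\ell$; (ii) the residual $(\X', \Y')$ is again a balanced pair, since we have removed $\ell$ special vertices together with $\ell$ trees from $\Y$ (or $1$ and $0$ in the $\ell=0$ case); and (iii) the map is invertible, with inverse simply prepending $T_0$ to $\X'$ and the trees $T_1, \dots, T_\ell$ to $\Y'$ (with the $\ell=0$ tree placed so as to preserve the balanced-pair identity).

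The step I expect to be the main obstacle is the case where the first tree of $\X$ carries no special vertex, so that $a_0 = 0$ and neither the $\ell = 0$ condition nor the partial-sum inequality $a_0 \geq 1$ is directly satisfied. Handling this situation gracefully---likely by instead designating a single-vertex special tree from $\Y$ as an $\ell=0$ piece, or by invoking a cyclic-lemma-style rotation to realign the sequence of $a_i$---is the delicate part of the argument. Verifying that the refined extraction rule is deterministic, produces a genuinely $k$-good $\A$ in every case, and admits a well-defined inverse recovering the original placement of each tree in $\X$ versus $\Y$ is the technical crux of the proof.
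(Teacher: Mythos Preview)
Your first-passage argument matches the paper's Case~2: when $\X \neq \emptyset$ and $X_1$ contains a special vertex, one peels off $\A = (X_1, Y_1, \dots, Y_j)$ for the least $j$ at which the running special-vertex count equals $j$. The case you flag as the main obstacle, $a_0 = 0$ with $\X \neq \emptyset$, is in fact the paper's trivial Case~1: it simply sets $\A = (X_1)$ and moves on. (You are right that the literal $\ell = 0$ clause in the definition of $k$-good seems to exclude this, but the paper's inverse map explicitly treats a single-tree $\A$ with no special vertex, so the intended definition evidently allows it.)

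The real gap in your proposal is the case $\X = \emptyset$. Taking $T_0 = Y_1$ does not work: extracting $\A = (Y_1, \dots, Y_{\ell+1})$ removes $\ell$ special vertices but $\ell + 1$ trees from $\Y$, so the residual pair is no longer balanced; and your stated inverse, which always prepends $T_0$ to $\X'$, can never reproduce an empty $\X$. Neither a cyclic rotation nor pulling a single special vertex out of $\Y$ repairs this (no single-vertex special tree need exist in $\Y$). The paper's Case~3 is of an entirely different character: it locates the tree $Y_i$ containing the global maximum $m$, applies the bijection $\phi$ of Lemma~\ref{lem:CDG} and then $\rho$ of Lemma~\ref{lem:leaf} to convert $Y_i$ into an ordered forest on $V(Y_i)\setminus\{m\}$, and finally performs an order-preserving relabeling of all remaining vertices that swaps $m$ for a chosen special vertex $s_i$ (the index $i$ being encoded by which special label is used). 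The output is $\A = (s_i)$, a single special vertex, and the relabeled pieces form the new $\X'$ and $\Y'$. This use of $\phi$, $\rho$, and the relabeling trick is the substantive content of the lemma and is not captured by any first-passage or cycle-lemma argument on the sequence of special-vertex counts.
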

\begin{proof}
  Suppose $\X=(X_1,X_2,\dots,X_s)$ and $\Y=(Y_1,Y_2,\dots,Y_t)$.  We define
  $f(\X,\Y)=(\A,(\X',\Y'))$ as follows.

Case 1: If $s\geq1$, and $X_1$ does not contain a special vertex, we define
$\A=(X_1)$, $\X'=(X_2,\dots,X_s)$, and $\Y' = \Y$. 

Case 2: If $s\geq1$, and $X_1$ contains at least one special vertex, there is a
unique integer $1\leq j\leq t$ such that $(X_1,Y_1,Y_2,\dots,Y_j)$ is a $k$-good
ordered forest. Then we define $\A=(X_1,Y_1,Y_2,\dots,Y_j)$,
$\X'=(X_2,X_3,\dots,X_s)$, and $\Y'=(Y_{j+1},Y_{j+2},\dots,Y_t)$. Since $\A$ is
a $k$-good ordered forest, there are $j$ special vertices in
$X_1,Y_1,Y_2,\dots,Y_j$. This implies that $(\X',\Y')$ is also a balanced pair.

Case 3: If $s=0$, then $\X=\emptyset$ and there are $t$ special vertices in
$Y_1,Y_2,\dots, Y_t$. Let $U=V(Y_1)\uplus\cdots\uplus V(Y_s)$ and let
$m=\max(U)$.  Suppose $Y_i$ contains $m$.  We apply the map $\phi$ in
Lemma~\ref{lem:CDG} to $Y_i$. Then $\phi(Y_i)$ is a rooted tree such that $m$ is
a leaf. If we apply the map $\rho$ in Lemma~\ref{lem:leaf} to $\phi(Y_i)$, we
get an ordered forest $\rho(\phi(Y_i)) = (T_1,T_2,\dots, T_\ell)$ on
$V(Y_i)\setminus\{m\}$. Let $\overline{\X}=(T_1,T_2,\dots, T_\ell)$ and
$\overline{\Y} = (Y_1,Y_2,\dots, \widehat{Y_i}, \dots,Y_t)$.  Note that the set
of vertices in $\overline{\X}$ and $\overline{\Y}$ is $U\setminus\{m\}$.  Let
$s_1<s_2<\cdots<s_t$ be the special vertices in $U$.  Suppose $U\setminus\{m\} =
\{ v_1<v_2<\cdots<v_{p}\}$ and $U\setminus\{s_i\} = \{
u_1<u_2<\cdots<u_{p}\}$. Then we define $\X'$ (resp.~$\Y'$) to be the ordered
forest obtained from $\overline{\X}$ (resp.~$\overline{\Y}$) by replacing $v_j$
with $u_j$ for all $j$. We define $\A$ to be the rooted tree with only one
vertex $s_i$.  It is clear from the construction that $\A$ is a $k$-good ordered
forest and $(\X',\Y')$ is a balanced pair.

In all cases, we clearly have $V(\X)\uplus V(\Y) = V(\A) \uplus V(\X') \uplus
V(\Y')$.  We now show that $f$ is invertible by constructing the inverse map
$g=f^{-1}$.  Suppose $\A=(A_1,\dots,A_r)$, $\X'=(X_1,\dots,X_s)$, and
$\Y'=(Y_1,\dots,Y_t)$, where $\A$ is a $k$-good forest and $(\X',\Y')$ is a
balanced pair.  Then we define $g(\A,(\X',\Y'))=(\X,\Y)$ as follows.

Case 1: If $r=1$, and $A_1$ does not have a special vertex, we define
$\X=(A_1,X_1,\dots,X_s)$ and $\Y=\Y'$.

Case 2: If $r\ge2$, we define $\X=(A_1,X_1,\dots,X_s)$ and
$\Y=(A_2,\dots,A_r, Y_1,\dots,Y_t)$. 

Case 3: If $r=1$, and $A_1$ has a special vertex, then by definition of $k$-good
forests, $A_1$ has only one vertex.  Let $U$ be the set of vertices in $\A$,
$\X'$, and $\Y'$, and $m=\max(U)$. Suppose $s_1<\cdots<s_{t+1}$ are the $t+1$
special vertices in $U$, and the unique vertex in $A_1$ is
$s_j$.  Let $U\setminus\{m\} = \{ v_1<v_2<\cdots<v_{p}\}$ and $U\setminus\{s_j\}
= \{ u_1<u_2<\cdots<u_{p}\}$. Then we define $\overline{\X}=(T_1,\dots,T_r)$ and
$\overline{\Y}=(R_1,\dots,R_s)$ to be the ordered forests obtained from $\X'$
and $\Y'$ by replacing $u_i$ with $v_i$ for all $i$.  Then the set of vertices
in $\X'$ and $\Y'$ is now $U\setminus\{m\}$. Thus we can construct the tree
$T=\rho^{-1}(\X')$ with maximum label $m$, and $R=\phi^{-1}(T)$ is a minimally
rooted tree. We define $\X=\emptyset$ and $\Y=(R_1,\dots,R_{i-1}, R,
R_i,\dots,R_s)$. 

It is easy to see that $g$ is the inverse map of $f$. 
\end{proof}

Now we are ready to define the map $\gamma$.  Suppose $(T_1,T_2,\dots,T_\ell,
T_{\ell+1}, T_{\ell+2}, \dots, T_{\ell+k})\in \BB_{n,k}$. Let
$\X=(T_1,T_2,\dots,T_\ell)$ and $\Y=(T_{\ell+1}, T_{\ell+2}, \dots,
T_{\ell+k})$. Then $(\X,\Y)$ is a balanced pair.  We define
$(\X_0,\Y_0),(\X_1,\Y_1),\dots,$ and $\A_1, \A_2,\dots,$ as follows.  Let
$(\X_0,\Y_0)=(\X,\Y)$. For $i\geq0$, if $(\X_i,\Y_i)\ne(\emptyset,\emptyset)$,
we define $\A_{i+1}, \X_{i+1}, \Y_{i+1}$ by $f(\X_i, \Y_i) = (\A_{i+1},
(\X_{i+1}, \Y_{i+1}))$. Let $p$ be the smallest integer such that
$\X_p=\Y_p=\emptyset$. Then we define $\gamma(\X,\Y)$ to be
$(\A_1,\A_2,\dots,\A_p)\in \CC_{n,k}$. Since $f$ is invertible, $\gamma$ is a
bijection.

\begin{example}
Let
\[
\F = \left(
\raisebox{.8cm}{
\pstree{\TR{10}}{\TR{4}} \comma
\TR{15} \comma
\pstree{\TR{1}}{\TR{3}}  \comma
\pstree{\TR{13}}{
  \TR{17}
}
\comma
    \pstree{\TR{8}}{
      \TR{19}
      \TR{18}
    }
\comma
\TR{16} \comma
\TR{12} \comma
\pstree{\TR{5}}{\TR{6}} \comma
\pstree{\TR{7}}{\pstree{\TR{14}}{\TR{9} \TR{2}}} \comma
\TR{11}
}
\right) \in \BB_{19,7}.
\]
Note that special vertices are less than or equal to $7$. Then
\[
\X=\X_0 = \left(
\raisebox{.8cm}{
\pstree{\TR{10}}{\TR{4}} \comma
\TR{15} \comma
\pstree{\TR{1}}{\TR{3}}
}
\right), \quad
\Y = \Y_0=\left(
\raisebox{.8cm}{
\pstree{\TR{13}}{
  \TR{17}
}
\comma
    \pstree{\TR{8}}{
      \TR{19}
      \TR{18}
    }
\comma
\TR{16} \comma
\TR{12} \comma
\pstree{\TR{5}}{\TR{6}} \comma
\pstree{\TR{7}}{\pstree{\TR{14}}{\TR{9} \TR{2}}} \comma
\TR{11}
}
\right),
\]
\[
\A_1 = \left(
\raisebox{.4cm}{
\pstree{\TR{10}}{\TR{4}} \comma
\pstree{\TR{13}}{
  \TR{17}
}
}
\right), \quad
\X_1 = \left(
\raisebox{.4cm}{
\TR{15} \comma
\pstree{\TR{1}}{\TR{3}}
}
\right), \quad
\Y_1=\left(
\raisebox{.8cm}{
   \pstree{\TR{8}}{
      \TR{19}
      \TR{18}
    }
\comma
\TR{16} \comma
\TR{12} \comma
\pstree{\TR{5}}{\TR{6}} \comma
\pstree{\TR{7}}{\pstree{\TR{14}}{\TR{9} \TR{2}}} \comma
\TR{11}
}
\right),
\]
\[
\A_2 = \left(
\raisebox{.1cm}{
\TR{15}}
\right), \quad
\X_2 = \left(
\raisebox{.4cm}{
\pstree{\TR{1}}{\TR{3}}
}
\right), \quad
\Y_2=\left(
\raisebox{.8cm}{
   \pstree{\TR{8}}{
      \TR{19}
      \TR{18}
    }
\comma
\TR{16} \comma
\TR{12} \comma
\pstree{\TR{5}}{\TR{6}} \comma
\pstree{\TR{7}}{\pstree{\TR{14}}{\TR{9} \TR{2}}} \comma
\TR{11}
}
\right),
\]
\[
\A_3 = \left(
\raisebox{.4cm}{
\pstree{\TR{1}}{\TR{3}}\comma
   \pstree{\TR{8}}{
      \TR{19}
      \TR{18}
    }
\comma
\TR{16}
}
\right), \quad
\X_3 = \emptyset, \quad
\Y_3=\left(
\raisebox{.8cm}{
\TR{12} \comma
\pstree{\TR{5}}{\TR{6}} \comma
\pstree{\TR{7}}{\pstree{\TR{14}}{\TR{9} \TR{2}}} \comma
\TR{11}
}
\right).
\]
In $\Y_3$ the largest vertex $14$ is in the third tree. Using $\phi$ and $\rho$
we get
\[
(\rho \circ \phi) \left(
\raisebox{.8cm}{
\pstree{\TR{7}}{\pstree{\TR{14}}{\TR{9} \TR{2}}} 
}
\right) = 
\rho \left(
\raisebox{.8cm}{
 \pstree{\TR{9}}{
    \pstree{\TR{7}}{
      \TR{14}
    }
    \TR{2}
  }
}
\right) = 
\left(
\raisebox{.4cm}{
\pstree{\TR{9}}{\TR{2}} \comma 
\TR{7}
}
\right). 
\]
Thus
\[
\overline{\X_3} = 
\left(
\raisebox{.4cm}{
\pstree{\TR{9}}{\TR{2}} \comma 
\TR{7}
}
\right), \quad
\overline{\Y_3}=\left(
\raisebox{.4cm}{
\TR{12} \comma
\pstree{\TR{5}}{\TR{6}} \comma
\TR{11}
}
\right).
\]
Since $\X_3$ and $\Y_3$ have 4 special vertices $2,5,6,7$, and $6$ is the third
smallest special vertex, we replace the vertices in 
$U\setminus\{14\}$ with those in $U\setminus\{6\}$. Since
\[
\begin{array}{ccccccccccc}
U\setminus\{14\} &=&    \{ & 2, & 5, & 6, & 7, & 9, & 11, & 12 &\}, \\
U\setminus\{6\} &=&    \{ &2, & 5, & 7, & 9, & 11, & 12, & 14 &\},
\end{array}
\]
we get
\[
\A_4 = \left(
\raisebox{.1cm}{
\TR{6}
}
\right), \quad
\X_4 = \overline{\X_3'}= 
\left(
\raisebox{.4cm}{
\pstree{\TR{11}}{\TR{2}} \comma 
\TR{9}
}
\right), \quad
\Y_4=\overline{\Y_3'}= \left(
\raisebox{.4cm}{
\TR{14} \comma
\pstree{\TR{5}}{\TR{7}} \comma
\TR{12}
}
\right),
\]
\[
\A_5 = \left(
\raisebox{.4cm}{
\pstree{\TR{11}}{\TR{2}} \comma
\TR{14} 
}
\right), \quad
\X_5=
\left(
\raisebox{.1cm}{
\TR{9}
}
\right), \quad
\Y_5=
\left(
\raisebox{.4cm}{
\pstree{\TR{5}}{\TR{7}} \comma
\TR{12}
}
\right),
\]
\[
\A_6 = \left(
\raisebox{.1cm}{
\TR{9}
}
\right), \quad
\X_6=\emptyset, \quad
\Y_6=
\left(
\raisebox{.4cm}{
\pstree{\TR{5}}{\TR{7}} \comma
\TR{12}
}
\right).
\]
In $\Y_6$, the largest vertex $12$ is in the second tree. 
\[
(\rho\circ\phi)(12) = \rho(12) = \emptyset.
\]
Thus
\[
\overline{\X_6}= \emptyset, \quad
\overline{\Y_6}=
\left(
\raisebox{.4cm}{
\pstree{\TR{5}}{\TR{7}}
}
\right).
\]
If we replace the labels in $\{5,7\}$ with $\{5,12\}$ we get
\[
\A_7 = \left(
\raisebox{.1cm}{
\TR{7}
}
\right), \quad
\X_7 =\emptyset, \quad
\Y_7=
\left(
\raisebox{.4cm}{
\pstree{\TR{5}}{\TR{12}}
}
\right).
\]
Since
\[
(\rho\circ\phi)\left(\raisebox{.4cm}{\pstree{\TR{5}}{\TR{12}}}\right) = 
\rho\left(\raisebox{.4cm}{\pstree{\TR{5}}{\TR{12}}}\right) = 5, 
\]
we have $\overline{\X_7}= (5)$ and $\overline{\Y_6}= \emptyset$. 
Replacing $5$ with $12$ we get
\[
\A_8 = \left(
\raisebox{.1cm}{
\TR{5}
}
\right), \quad
\X_8 = 
\left(
\raisebox{.1cm}{
\TR{12}
}
\right), \quad
\Y_8=\emptyset.
\]
Finally we get
\[
\A_9 = \left(
\raisebox{.1cm}{
\TR{12}
}
\right), \quad
\X_9 =\emptyset, \quad
\Y_9=\emptyset.
\]
Thus 
\[
\gamma(\F)=
\left(
\left(
\raisebox{.4cm}{
\pstree{\TR{10}}{\TR{4}} \comma
\pstree{\TR{13}}{\TR{17}}}
\right), 
\left(
\raisebox{.1cm}{
\TR{15}}
\right), 
\left(
\raisebox{.4cm}{
\pstree{\TR{1}}{\TR{3}}\comma
   \pstree{\TR{8}}{
      \TR{19}
      \TR{18}
    }
\comma
\TR{16}
}
\right), 
\left(
\raisebox{.1cm}{
\TR{6}
}
\right), 
\left(
\raisebox{.4cm}{
\pstree{\TR{11}}{\TR{2}} \comma
\TR{14} 
}
\right), 
\left(
\raisebox{.1cm}{
\TR{9}
}
\right), 
\left(
\raisebox{.1cm}{
\TR{7}
}
\right), 
\left(
\raisebox{.1cm}{
\TR{5}
}
\right), 
\left(
\raisebox{.1cm}{
\TR{12}
}
\right)
\right)\in \CC_{19,7}.
\]

\end{example}


\subsection{A bijection $\zeta:\CC_{n,k}\to\FF_{n,k}$}

Recall that a special vertex is a vertex with label at most $k$.

\begin{lem}\label{lem:good}
  For a fixed set $A\subset [n]$ with $|A|\geq2$, there is a bijection $\psi$
  from the set of $k$-good ordered forests on $A$ to the set of rooted trees on
  $A$ whose special vertices are nonleaves.
\end{lem}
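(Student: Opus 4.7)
The plan is to construct the bijection $\psi$ by pairing each minimally rooted tree $T_i$ ($i=1,\dots,\ell$) of a $k$-good ordered forest $(T_0,T_1,\dots,T_\ell)$ with one of the special vertices of the forest, and then attaching the root of $T_i$ as a new child of its matched vertex. Since $|A|\geq 2$ forces $\ell\geq 1$, condition (1) in the definition of $k$-good is excluded and we always work under (2). In particular there are exactly $\ell$ special vertices, all lying in $T_0,\dots,T_{\ell-1}$, distributed according to a ballot-type condition, which makes an $\ell$-to-$\ell$ pairing possible and ensures that every special vertex ends up with at least one attached child, hence is a nonleaf in the image tree.

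In the forward direction I would traverse $T_0,T_1,\dots,T_\ell$ in order and maintain a stack of ``unmatched'' special vertices. Each special vertex is pushed onto the stack the first time it is visited in a preorder traversal of the current tree, and each time a new $T_i$ with $i\geq 1$ is about to be placed, the top of the stack is popped and its vertex is used as the attachment point for the root of $T_i$. The ballot condition in (2) guarantees that the stack is nonempty whenever we want to pop, and the equality at $i=\ell$ guarantees the stack is empty at the end.

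For the inverse I would start with a rooted tree $T$ on $A$ whose special vertices are nonleaves and iteratively cut off minimally rooted subtrees to recover $T_\ell,T_{\ell-1},\dots,T_1$ in reverse order, leaving $T_0$ as the final remainder. The first cut removes a subtree serving as $T_\ell$, recognized by being minimally rooted and containing no special vertex (as forced by (2)); subsequent cuts follow an analogous canonical rule. The cut-off order must exactly reverse the forward pairing, which is what determines the rule for selecting the subtree to remove at each stage.

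The main obstacle is choosing the canonical pairing forward and the canonical cut-off rule inverse so that the two procedures compose to the identity. A special vertex $s$ in the image tree may have several children whose subtrees are minimally rooted, but only one of these corresponds to an attached $T_i$, and we need a structural criterion that picks it out unambiguously -- for example, by comparing vertex labels in the manner suggested by the proof of Lemma~\ref{lem:CDG}, or by invoking $\phi$ and $\rho$ of Lemmas~\ref{lem:CDG} and \ref{lem:leaf} to swap between minimally rooted trees and trees whose maximum is a leaf. Verifying that the sequence of iterative cuts produces a forest satisfying the ballot condition of the $k$-good definition is the principal technical point; the key invariant to maintain is that each special vertex is popped from the stack at exactly one moment in the forward traversal, which translates to the uniqueness of the corresponding cut-off child in the inverse.
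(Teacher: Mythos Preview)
Your overall strategy matches the paper's: pair the $\ell$ special vertices with the trees $T_1,\dots,T_\ell$ via the ballot condition and attach each $T_i$ below its matched special vertex so as to force that vertex to be a nonleaf. But you correctly flag, and do not close, the central gap: with a plain attachment there is no structural mark in the image tree telling which child of a special vertex $s$ is the newly attached root and which were original children. Your suggestions (comparing labels, or invoking $\phi,\rho$) do not resolve this, and without it the inverse map is simply not well-defined.

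The paper's key idea, which your proposal is missing, is a small twist on the attachment step. When the selected special vertex $v$ is to be merged with the next tree (root $r$), the paper first \emph{exchanges} the set of children of $v$ with the set of children of $r$, and only then makes $r$ a child of $v$. Since that next tree is minimally rooted, $r$ is smaller than all of its own children; after the swap those children sit alongside $r$ as the new children of $v$, so $r$ is now the \emph{smallest} child of $v$. Moreover, each special vertex is processed exactly once, and any later swap alters only the child-sets of the currently selected special vertex and of a root coming from a not-yet-merged tree, hence never the child-set of an already processed $v$. Thus in the final tree every special vertex carries its attached root as its smallest child---exactly the canonical marker your inverse needs, obtained for free from the minimally-rooted hypothesis.

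Two further differences are worth noting. First, the paper's matching rule is not your preorder stack but a BFS-style rule: among the still-active special vertices in the current merged tree, take one of minimum depth, breaking ties by smallest label. Second, your preorder traversal needs a convention for ordering children (these are not plane trees); the paper's depth-then-label rule avoids that issue. With the swap trick and this selection rule in place, reversing the construction becomes routine, which is all the paper asserts.
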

\begin{proof}
  Suppose $\F=(A_1,A_2,\dots,A_p)$ is a $k$-good ordered forest.  We first set
  all special vertices in $\F$ \emph{active}. Find the smallest vertex $v$ among
  the active vertices with minimal distance from the root in $A_1$. Then
  exchange the subtrees attached to $v$ and those attached to the root $r$ of
  $A_2$, and then attach the resulting tree rooted at $r$ to $v$ as shown below.
\begin{center}
\psset{nodesep=2pt, levelsep=30pt}
\pstree{\TR{$v$}}{
  \Tcircle{$T_1$}
 \TR{$\cdots$}
  \Tcircle{$T_a$}
}
\qquad
\pstree{\TR{$r$}}{
  \Tcircle{$U_1$}
 \TR{$\cdots$}
  \Tcircle{$U_b$}
}
\qquad
\raisebox{-.5cm}{$\Rightarrow$}
\qquad
\pstree{\TR{$v$}}{
  \Tcircle{$U_1$}
 \TR{$\cdots$}
  \Tcircle{$U_b$}
  \pstree{\TR{$r$}}{
    \Tcircle{$T_1$}
    \TR{$\cdots$}
    \Tcircle{$T_a$}
  }
}
\end{center}
We then make $v$ inactive. Note that $v$ is a nonleaf after this procedure. We
do the same thing with the resulting tree and $A_3$, and proceed until there are
no active special vertices. Since $(A_1,A_2,\dots,A_p)$ is $k$-good, we can
eventually combine all of $A_1,A_2,\dots,A_p$ into a single rooted tree in which
the special vertices are nonleaves. We define $\psi(\F)$ to be the resulting
tree. It is straightforward to check that $\psi$ is invertible. 
\end{proof}

Let $(\F_1,\F_2,\dots,\F_h)\in \CC_{n,k}$. For each $k$-good forest $\F_i$ we
define $T_i=\psi(\F_i)$ if $\F_i$ has at least $2$ vertices, and $T_i=X$ if
$\F_i=(X)$ and $X$ has only one vertex. Then we define
$\zeta(\F_1,\F_2,\dots,\F_h)=\rho^{-1}(T_1,\dots,T_h)$. Since $\rho^{-1}$ and
$\psi$ are invertible, $\zeta$ is a bijection.

\begin{example}
Let $(\F_1,\F_2,\dots,\F_h)$ be the following:
\[
\left(
\left(
\raisebox{.4cm}{
\pstree{\TR{10}}{\TR{4}} \comma
\pstree{\TR{13}}{\TR{17}}}
\right), 
\left(
\raisebox{.1cm}{
\TR{15}}
\right), 
\left(
\raisebox{.4cm}{
\pstree{\TR{1}}{\TR{3}}\comma
   \pstree{\TR{8}}{
      \TR{19}
      \TR{18}
    }
\comma
\TR{16}
}
\right), 
\left(
\raisebox{.1cm}{
\TR{6}
}
\right), 
\left(
\raisebox{.4cm}{
\pstree{\TR{11}}{\TR{2}} \comma
\TR{14} 
}
\right), 
\left(
\raisebox{.1cm}{
\TR{9}
}
\right), 
\left(
\raisebox{.1cm}{
\TR{7}
}
\right), 
\left(
\raisebox{.1cm}{
\TR{5}
}
\right), 
\left(
\raisebox{.1cm}{
\TR{12}
}
\right)
\right)\in \CC_{19,7}.
\]
Then the map $\psi$ sends
\[
\left(
\raisebox{.4cm}{
\pstree{\TR{10}}{\TR{4}} \comma
\pstree{\TR{13}}{\TR{17}}}
\right)
\mapsto 
\raisebox{.4cm}{
\pstree{\TR{10}}{
    \pstree{\TR{4}}{
      \TR{17}
      \TR{13}
    }
  }
},
\]
\[
\left(
\raisebox{.4cm}{
\pstree{\TR{1}}{\TR{3}}\comma
   \pstree{\TR{8}}{
      \TR{19}
      \TR{18}
    }
\comma
\TR{16}
}
\right)
\mapsto
\left(
\raisebox{.8cm}{
\pstree{\TR{1}}{
  \TR{19} \TR{18}
  \pstree{\TR{8}}{
    \TR{3}
  }
}\comma
\TR{16}
}
\right)
\mapsto
\raisebox{1.2cm}{
\pstree{\TR{1}}{
  \TR{19} \TR{18}
  \pstree{\TR{8}}{
    \pstree{\TR{3}}{
      \TR{16}
    }
  }
}
},
\]
\[
\left(
\raisebox{.4cm}{
\pstree{\TR{11}}{\TR{2}} \comma
\TR{14} 
}
\right)
\mapsto
\raisebox{.4cm}{
\pstree{\TR{11}}{\pstree{\TR{2}}{\TR{14}}}
}.
\]  
Thus we obtain $(T_1,\dots,T_h)$:
\[
\left(
\raisebox{1.1cm}{
\pstree{\TR{10}}{
    \pstree{\TR{4}}{
      \TR{17}
      \TR{13}
    }
  } \comma 
\TR{15} \comma 
\pstree{\TR{1}}{
  \TR{19} \TR{18}
  \pstree{\TR{8}}{
    \pstree{\TR{3}}{
      \TR{16}
    }
  }
} \comma
\TR{6} \comma 
\pstree{\TR{11}}{\pstree{\TR{2}}{\TR{14}}}\comma
\TR{9} \comma 
\TR{7} \comma 
\TR{5} \comma 
\TR{12} 
}
\right)
\]
If we add the vertex $n+1$, we obtain $\zeta(\F_1,\F_2,\dots,\F_h)$:
\[
\pstree{\TR{10}}{
 \pstree{\TR{15}}{
    \pstree{\TR{1}}{
      \TR{19} \TR{18}
      \pstree{\TR{8}}{
        \pstree{\TR{3}}{
          \TR{16}
        }
     }
      \pstree{\TR{6}}{
        \pstree{\TR{11}}{
         \pstree{\TR{9}}{
            \pstree{\TR{7}}{
              \pstree{\TR{5}}{
                \pstree{\TR{12}}{
                  \TR{20}
                }
              }
            }
          }
          \pstree{\TR{2}}{
            \TR{14}
          }
       }
      }
    }
  } 
  \pstree{\TR{4}}{
    \TR{17}
    \TR{13}
  }
}
\]
\end{example}

\section{Some properties of $|\TT_{n,k}|$}\label{sec:some-properties-tt_n}

We denote the cardinality of $|\TT_{n,k}|$ by $a_{n,k}$. 
In \cite{SeoShin} Seo and Shin proved the following.
\begin{thm}\cite{SeoShin}
We have
 \begin{align}
    \label{eq:3}
    \sum_{k\ge0} \binom{m+k}{k} a_{n,k} &= (n+m+1)^n, \\
\label{eq:4}
    \sum_{k\ge0} \frac{1}{k} a_{n,k} &= n^n.
 \end{align}
\end{thm}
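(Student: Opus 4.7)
\textbf{Addressing identity (\ref{eq:3}).}
The plan is to combine the bijection $a_{n,k}=|\FF_{n,k}|$ from the main theorem with the inclusion--exclusion formula $|\FF_{n,k}|=\sum_{j=0}^{k}(-1)^{j}\binom{k}{j}(n-j)^{n}$, which counts functions $[n]\to[n]$ that hit every element of $[k]$. Substituting into the left-hand side and swapping the order of summation yields $\sum_{j}(-1)^{j}(n-j)^{n}\sum_{k\ge j}\binom{m+k}{k}\binom{k}{j}$. Applying the identity $\binom{m+k}{k}\binom{k}{j}=\binom{m+j}{j}\binom{m+k}{k-j}$, followed by the hockey-stick identity $\sum_{\ell=0}^{n-j}\binom{m+j+\ell}{\ell}=\binom{m+n+1}{n-j}$ (valid because $|\FF_{n,k}|=0$ for $k>n$), reduces the left-hand side to
\[
\sum_{j=0}^{n}(-1)^{j}(n-j)^{n}\binom{m+j}{j}\binom{m+n+1}{n-j}.
\]
I would then recognize this as a Vandermonde-type expansion of $(n+m+1)^{n}$, verifiable by inclusion--exclusion on functions $[n]\to[n+m+1]$ graded by the largest initial segment of $[n]$ hit, or by induction on $n$.

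\textbf{Addressing identity (\ref{eq:4}).}
For this I would give a bijective proof using $a_{n,k}=|\AA_{n,k}|$. In each cycle in $\AA_{n,k}$ the tree $T_{0}$ containing $n+1$ is canonically distinguished, so $\AA_{n,k}$ is in bijection with sequences $(T_{0},T_{1},\ldots,T_{k})$ of minimally rooted trees partitioning $[n+1]$ with $n+1\in V(T_{0})$. Quotienting by the free cyclic rotation action on $(T_{1},\ldots,T_{k})$ for $k\ge 1$, we find that $a_{n,k}/k$ counts pairs $(T_{0},\,[T_{1},\ldots,T_{k}])$ consisting of a minimally rooted tree $T_{0}$ containing $n+1$ together with a cycle of $k$ minimally rooted trees on $[n+1]\setminus V(T_{0})$.

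To biject such pairs with functions $f\colon[n]\to[n]$ I would first apply $\phi$ and then $\rho$ (Lemmas \ref{lem:CDG} and \ref{lem:leaf}) to $T_{0}$ in order to obtain an ordered forest $\F_{0}$ on $V(T_{0})\setminus\{n+1\}$. The cycle $[T_{1},\ldots,T_{k}]$ of minimally rooted trees is read directly as a single functional-digraph component on $[n+1]\setminus V(T_{0})$: its cyclic structure becomes the cycle of $f$, and each $T_{i}$ supplies the rooted tree attached at its cyclic vertex (the root of $T_{i}$). Finally, the ordered forest $\F_{0}$ is grafted onto this component via $\rho^{-1}$, with the role played by $n+1$ in Lemma \ref{lem:leaf} now played by a canonical vertex of the cycle, for instance the root of the tree $T_{i}$ containing $\max([n+1]\setminus V(T_{0}))$. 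Summing over $k\ge 1$ produces exactly the $n^{n}$ functions $[n]\to[n]$.

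\textbf{Main obstacles.}
For (\ref{eq:3}) the hard part is the closing Vandermonde-type identification $\sum_{j}(-1)^{j}(n-j)^{n}\binom{m+j}{j}\binom{m+n+1}{n-j}=(n+m+1)^{n}$; the algebraic steps are routine, but finding the right combinatorial reading of the alternating sum over functions $[n]\to[n+m+1]$ takes care. For (\ref{eq:4}) the delicate step is canonically specifying the grafting and then verifying invertibility: given a function $f$ one must uniquely recover both the distinguished tree $T_{0}$, hence the partition $V(T_{0})\sqcup([n+1]\setminus V(T_{0}))$, and the cyclic sequence $[T_{1},\ldots,T_{k}]$, which requires the canonical attachment vertex to be intrinsically determined by the functional-digraph component and not by auxiliary choices.
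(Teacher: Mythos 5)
Your treatment of \eqref{eq:4} has a genuine gap at the final step. The reduction itself is fine: distinguishing the tree $T_0$ containing $n+1$ and applying $\phi$ and $\rho$ to it is exactly the paper's bijection $\beta$, and quotienting the remaining $k$ trees by rotation correctly shows that $a_{n,k}/k$ counts pairs $(\F_0,[T_1,\dots,T_k])$ of an ordered forest on a subset of $[n]$ and a cycle of $k$ minimally rooted trees on the complement. But the proposed passage to functions fails: reading $[T_1,\dots,T_k]$ as one cyclic component and grafting $\rho^{-1}(\F_0)$ onto a vertex of that cycle always produces a function whose functional digraph is \emph{connected} and has a single cycle. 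Hence every $f$ with two or more cyclic components is never reached, and since the domain has exactly $n^n$ elements while there are fewer connected functions, the map also cannot be injective. Concretely, for $n=2$ the pair with $T_0=\{3\}$ and cycle $[\,1\text{ with child }2\,]$ and the pair with $T_0$ the tree $2$ with child $3$ and cycle $[\{1\}]$ both yield the constant function $f(1)=f(2)=1$, while the identity function (two fixed points) is never obtained. The paper avoids functional digraphs entirely: after the same quotient it applies $\alpha^{-1}$ (Theorem~\ref{thm:main}) to turn the cycle $[T_1,\dots,T_k]$ into a single rooted tree $T$ with $|\MD(T)|=k$, so that $\sum_k a_{n,k}/k$ counts all ordered forests on $[n]$, and these are counted by $n^n$ via the forest-to-(tree, marked vertex) correspondence and the Pr\"ufer code. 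If you want to keep your functional-digraph picture you would need a mechanism that creates arbitrarily many cycles (a Foata-type correspondence), not a single grafted component.

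For \eqref{eq:3} your binomial manipulations are correct, but the argument stops at the unproved identity $\sum_{j}(-1)^{j}(n-j)^{n}\binom{m+j}{j}\binom{m+n+1}{n-j}=(n+m+1)^{n}$, which is of essentially the same depth as the identity you set out to prove; neither "induction on $n$" nor the sketched inclusion--exclusion reading is a proof as stated (the alternating sum does not arise in any evident way from grading functions by the largest initial segment hit). The identity is in fact true and can be closed, e.g., by expanding $(n-j)^{n}=\sum_{\ell}S(n,\ell)\,\ell!\binom{n-j}{\ell}$ and checking that the inner alternating sum collapses to $1$, leaving $\sum_{\ell}S(n,\ell)(n+m+1)(n+m)\cdots(n+m+2-\ell)=(n+m+1)^{n}$, but some such argument must be supplied. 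By contrast, the paper proves both \eqref{eq:3} and \eqref{eq:4} in a few lines from $a_{n,k}=\bigl[\tfrac{x^{n}}{n!}\bigr](1-e^{-x})^{k}e^{nx}$, summing the negative binomial series for \eqref{eq:3} and the logarithm series for \eqref{eq:4}; that uniform generating-function route sidesteps both of the difficulties above.
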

We give another proof using generating functions.

\begin{proof}
By Theorem~\ref{thm:SeoShin}, $a_{n,k}$ equals the number of  words of length
$n$ on $[n]$ with at least one $i$ for all $i\in[k]$. Thus
\[
a_{n,k} = \left[ \frac{x^n}{n!}\right] (e^x-1)^k e^{(n-k)x}
= \left[ \frac{x^n}{n!} \right] (1-e^{-x})^k e^{nx},
\]
where $\left[ y^n\right]f(y)$ denotes the coefficient of $y^n$ in $f(y)$. 
Then we have
\begin{align*}
\sum_{k\ge0} \binom{m+k}{k} a_{n,k} & =   
\left[ \frac{x^n}{n!}\right] e^{nx} \sum_{k\ge0}  \binom{m+k}{k}  (1-e^{-x})^k \\
&= \left[ \frac{x^n}{n!}\right] e^{nx} \frac{1}{(1-(1-e^{-x}))^{m+1}}\\
&= \left[ \frac{x^n}{n!}\right] e^{(n+m+1)x} = (n+m+1)^n,
\end{align*}
where the following binomial theorem \cite[(1.20)]{EC1second} is used:
\[
\frac{1}{(1-x)^n} = \sum_{k\geq0} \binom{n+k-1}{k} x^k.
\]

The second identity is proved similarly:
\begin{align*}
 \sum_{k\ge0} \frac{1}{k} a_{n,k} 
&=\left[ \frac{x^n}{n!}\right] e^{nx} \sum_{k\ge0} \frac{(1-e^{-x})^k}{k}\\
&=\left[ \frac{x^n}{n!}\right] e^{nx} \ln \frac{1}{1-(1-e^{-x})}\\
&=\left[ \frac{x^n}{n!}\right] x e^{nx}
= n! \left[ x^{n-1}\right] e^{nx} = n!\frac{n^{n-1}}{(n-1)!} = n^n.
\end{align*}
\end{proof}

In \cite{SeoShin} they asked for a bijective proof of \eqref{eq:4}. We can prove
it bijectively using our bijections as follows.

\begin{proof}[Bijective proof of \eqref{eq:4}]
By Theorem~\ref{thm:main}, $a_{n,k}$ is also equal to $|\BB_{n,k}|$, the number
of ordered forests $(T_1,T_2,\dots, T_\ell, T_{\ell+1}, \dots,T_{\ell+k})$ on
$[n]$ such that $T_{\ell+1}, \dots,T_{\ell+k}$ are minimally rooted. Thus
$a_{n,k}/k$ is equal to the number of pairs $(\F, C)$ of an ordered forest
$\F=(T_1,T_2,\dots, T_\ell)$ and a cycle $C=[T_{\ell+1}, \dots,T_{\ell+k}]$ of
$k$ minimally rooted trees such that the vertex sets of $T_1,\dots, T_{\ell+k}$
form a partition of $[n]$. Then, by Theorem~\ref{thm:main}, the number of cycles
of $k$ minimally rooted trees whose vertex sets form a subset $A$ of $[n]$ is
equal to the set of rooted trees $T$ on $A$ with $|\MD(T)|=k$. Thus $a_{n,k}/k$
is equal to the number of ordered forests $(T_1,T_2,\dots, T_\ell, T)$ on $[n]$
with $|\MD(T)|=k$. The sum of $a_{n,k}/k$ for all $k$ is equal to the number of
ordered forests on $[n]$. Suppose $(T_1,T_2,\dots, T_\ell)$ is an ordered forest
on $[n]$ and $r_i$ is the root of $T_i$ for $i\in[\ell]$. By adding the edges
$r_1r_2,r_2r_3,\dots, r_{\ell-1}r_\ell$, we get a rooted tree, say $H$. If we
know the root $r_\ell$ of the last tree, then we can recover the ordered forest
$(T_1,T_2,\dots, T_\ell)$ from $H$. Thus there is a bijection between the set of
ordered forests on $[n]$ and the set of rooted trees on $[n]$ with a choice of
$r_\ell$. The latter set has cardinality $n^n$ by Pr\"ufer code. This proves
\eqref{eq:4}. 
\end{proof}

\section{Another proof of Theorem~\ref{thm:CDG}}
\label{sec:anoth-proof-theor}

Using Pr\"ufer code one can easily see that $\binom{n}{\ell} n^{n-\ell}$ is
equal to the number of rooted trees on $\{0,1,2,\dots,n+1\}$ such that $0$ is
the root with $\ell+1$ children and $n+1$ is a leaf. By deleting the root $0$,
such a tree is identified with a forest on $[n+1]$ with $\ell+1$ rooted trees such that
$n+1$ is a leaf. Thus by replacing $n+1$ with $n$, we can rewrite
Theorem~\ref{thm:CDG} as follows.

\begin{thm}\cite{Chauve2000}\label{thm:children}
  The number of rooted trees of $[n]$ such that the root has $\ell$ smaller children
  equals the number of forests on $[n]$ with $\ell+1$ trees such that $n$ is a leaf.
\end{thm}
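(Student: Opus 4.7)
The plan is to prove Theorem~\ref{thm:children} by constructing an explicit bijection built from the decomposition of $T$ at its root's smaller-child edges together with the map $\phi$ of Lemma~\ref{lem:CDG}, applied inductively on $n$.

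For the first step, given a rooted tree $T$ on $[n]$ whose root $r$ has $\ell$ smaller children $c_1 < \cdots < c_\ell$, I would remove the $\ell$ edges $rc_1, \ldots, rc_\ell$ to obtain an unordered forest $\{T_0, S_1, \ldots, S_\ell\}$ of $\ell + 1$ rooted trees. The component $T_0$ containing $r$ is minimally rooted, because its remaining children of $r$ are exactly the larger children of $r$ in $T$; each $S_i$ is rooted at $c_i < r$. Hence $r$ is the unique maximum among the roots of the forest. Conversely, from any forest of $\ell + 1$ rooted trees on $[n]$ whose max-root component is minimally rooted, we recover $T$ by attaching every non-max-root component to the root of the max-root component as a subtree. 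This yields a bijection between rooted trees on $[n]$ whose root has $\ell$ smaller children and forests of $\ell + 1$ rooted trees on $[n]$ whose max-root component is minimally rooted.

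For the second step, I would convert such a forest into one with $n$ a leaf, by induction on $n$. If $n \in V(T_0)$, then $n = \max V(T_0)$, and $\phi(T_0)$ is a rooted tree on $V(T_0)$ with $n$ a leaf; the output $\{\phi(T_0), S_1, \ldots, S_\ell\}$ is the desired forest. Otherwise $n \in V(S_j)$ for a unique $j$; apply the full bijection recursively to $S_j$ (a rooted tree on $V(S_j) \subsetneq [n]$ whose maximum is $n$ and whose root has $\ell_j$ smaller children) to obtain a forest $\{X_0, X_1, \ldots, X_{\ell_j}\}$ on $V(S_j)$ with $n$ a leaf in $X_0$. Then re-attach each of $X_1, \ldots, X_{\ell_j}$ to $T_0$ by making its root a new child of $r$, producing an output forest with $\ell + 1$ components in which $n$ is still a leaf in $X_0$.

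The main obstacle will be showing that this inductive construction is well-defined and invertible. The key point I would rely on is that the roots of the reattached pieces $X_1, \ldots, X_{\ell_j}$ are each less than $r$: by the recursive decomposition, these roots descend from smaller-child chains below $r$, so they lie strictly below $r$ in the original ordering. Consequently $r$ is always the maximum root of the output forest, and the reattached subtrees are identified in the output precisely as the subtrees of $r$ rooted at vertices smaller than $r$, while the original larger children of $r$ in $T_0$ remain the subtrees of $r$ rooted at vertices larger than $r$. The inverse bijection then reverses the recursion by detaching the smaller-rooted subtrees from $r$, grouping them with the $n$-containing component into the recursive image of $S_j$, inverting the induction on $V(S_j)$, and reassembling $T$; the case distinction is handled by checking whether $n$ belongs to the max-root component (Case A, apply $\phi^{-1}$) or not (Case B, recurse).
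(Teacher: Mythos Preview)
Your approach is genuinely different from the paper's (which uses the bijection $\alpha$ to cycles of minimally rooted trees and a left-to-right-maxima decomposition), but as written it has a real gap in the inverse direction.

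The problematic step is your case distinction ``check whether $n$ belongs to the max-root component.'' In Case~B your output forest is $\{T_0',\,S_i\ (i\neq j),\,X_0\}$ with $n\in X_0$, and the root of $X_0$ is the root of $\phi(T_0^{\mathrm{deep}})$ produced at the bottom of the recursion. By the construction of $\phi$ in Lemma~\ref{lem:CDG}, the root of $\phi(U)$ is $\max\bigl((V(\MD(P))\setminus\{\max A\})\cup\{\text{root}(U)\}\bigr)$, which is at least $\text{root}(U)$ but can be much larger. Concretely, take $n=5$ and $T$ the tree rooted at $2$ with child $1$, where $1$ has child $5$ and $5$ has children $3,4$. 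Then $r=2$, $\ell=1$, $T_0=\{2\}$, $S_1$ is the subtree rooted at $1$, and we are in Case~B. The recursion on $S_1$ is immediately Case~A with $T_0^{(1)}=S_1$; one computes $\phi(S_1)$ to be rooted at $4$ (with $5$ a leaf). Your output forest is $\{\{2\},\,\phi(S_1)\}$, whose max-root component is $\phi(S_1)$ and contains $n=5$. Your criterion therefore declares Case~A and applies $\phi^{-1}$, yielding a tree rooted at $1$, to which you would then try to attach $\{2\}$ as a \emph{smaller} child of $1$ --- impossible. So the stated inverse is not well-defined.

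A second, related issue is your justification that the reattached pieces $X_1,\dots,X_{\ell_j}$ have roots $<r$: ``these roots descend from smaller-child chains below $r$'' conflates tree-descendant with numerical order. The conclusion happens to be true, but it requires the inductive argument that the non-$n$ components of the recursive output have roots bounded by the root of the input tree --- not the sentence you wrote. If you want to salvage the construction, the correct case test is: apply $\phi^{-1}$ to the $n$-containing component and compare its root to the maximum root among the remaining components; you are in Case~A precisely when the former is larger. Proving this works uses the (unstated) property of $\phi$ that $\text{root}(\phi(U))\ge \text{root}(U)$, together with the fact that the deepest recursive root is strictly below $r$.
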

\begin{proof}
  Let $T$ be a rooted trees of $[n]$ such that the root has $\ell$ smaller
  children. We will construct a forest corresponding to $T$.  Recall the
  bijection $\alpha:\TT_{n,k}\to\AA_{n,k}$.  Suppose $T\in \TT_{n-1,k}$,
  $\alpha(T)=[T_0,T_1,\dots,T_k]$, $r_i$ is the root of $T_i$ for
  $i=0,1,2,\dots,k$. By shifting cyclically we can assume that $r_0$ is the
  largest root. Note that, by the construction of $\alpha$, $T$ is rooted at
  $r_0$.  Also, from the construction of $\alpha$, it is easy to see that the
  smaller children of the root $r_0$ in $T$ are exactly the left-to-right maxima
  of $r_1,r_2,\dots,r_k$.  Suppose $r_{i_1}<r_{i_2}<\cdots<r_{i_\ell}$ are the
  smaller children of $r_0$ in $T$. Then $1=i_1<i_2<\cdots<i_\ell\leq k$.
  Suppose $n$ is contained in $T_j$. Let $T_1', T_2',\dots, T_k'$ be the
  arrangement of the trees $T_0,T_1,\dots,\widehat{T_j}, \dots, T_k$ such that
  the word $r_1'r_2'\cdots r_k'$ of the roots of $T_1',\dots,T_k'$ are
  order-isomorphic to the word $r_1r_2\cdots r_k$. Notice that $r_{i_1}',
  r_{i_2}', \dots,r_{i_\ell}'$ are the left-to-right maxima of $r_1'r_2'\cdots
  r_k'$. Thus the following map is invertible:
  \begin{equation}
    \label{eq:5}
(T_1', T_2',\dots, T_k') \mapsto
\{[T_{i_1}', \dots,T_{i_2-1}'], [T_{i_2}', \dots, T_{i_3-1}'], \dots,
[T_{i_\ell}', \dots, T_k']\}.
 \end{equation}
 Now we apply the inverse map $\alpha^{-1}$ of the bijection $\alpha$ to each
 cycle in \eqref{eq:5}. Then we get a set of rooted trees. Together with $T_j$,
 we obtain a forest on $[n]$. Since $T_j$ is the tree containing $n$, we can
 recover the original tree $T$ from the forest. This gives a bijection between
 the two sets in the theorem.
\end{proof}


The proof of Theorem~\ref{thm:children}, in fact, gives a generalization as
follows.

\begin{cor}
  Let $A(n,\ell,k)$ denote the number of rooted trees of $[n]$ such that the
  root has $\ell$ smaller children and the minimal decreasing subtree has $k+1$
  vertices. Let $B(n,\ell,k)$ denote the number of forests on $[n]$ with
  $\ell+1$ trees such that $n$ is a leaf, and the sum of $|\MD(T)|$ for all
  trees $T$ in the forest except the one containing $n$ is equal to $k$. Then
  \[
A(n,\ell,k) = B(n,\ell,k).
\]
\end{cor}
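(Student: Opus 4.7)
The plan is to argue that the bijection constructed in the proof of Theorem~\ref{thm:children} already refines to one between the sets counted by $A(n,\ell,k)$ and $B(n,\ell,k)$; what remains is to track the extra statistic $|\MD|$ through each step of that construction.

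Let $T$ be a rooted tree on $[n]$ counted by $A(n,\ell,k)$, so the root of $T$ has $\ell$ smaller children and $|\MD(T)|=k+1$; equivalently $T\in\TT_{n-1,k}$. Applying $\alpha$ produces a cycle $[T_0,T_1,\dots,T_k]\in\AA_{n-1,k}$ of exactly $k+1$ minimally rooted trees. Following the proof of Theorem~\ref{thm:children}, I shift so that $r_0$ is the largest root, identify the tree $T_j$ containing $n$, and rearrange the remaining $k$ trees as $(T_1',T_2',\dots,T_k')$. The left-to-right maxima of $r_1',r_2',\dots,r_k'$ cut this sequence into $\ell$ consecutive blocks of lengths $m_1,m_2,\dots,m_\ell$ with $m_1+m_2+\cdots+m_\ell=k$, and the $p$-th block is converted into a cycle of $m_p$ minimally rooted trees as in \eqref{eq:5}.

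The crucial observation is that, by the definition of $\AA_{n,k}$, the bijection $\alpha$ sends a rooted tree with $|\MD|=m$ to a cycle of $m$ minimally rooted trees; consequently $\alpha^{-1}$ applied to a cycle of $m_p$ trees returns a rooted tree whose maximal decreasing subtree has exactly $m_p$ vertices. The output forest on $[n]$ therefore consists of $\ell$ trees whose $|\MD|$-values sum to $m_1+m_2+\cdots+m_\ell=k$, together with the tree $T_j$ containing $n$. By Theorem~\ref{thm:children} this forest has $\ell+1$ trees with $n$ a leaf, and the sum of $|\MD|$ over the trees not containing $n$ is precisely $k$, so it is counted by $B(n,\ell,k)$.

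The refinement is essentially a bookkeeping exercise on top of the proof of Theorem~\ref{thm:children}; bijectivity is inherited from that of $\alpha$ and of the blockwise decomposition in \eqref{eq:5}. The only step that needs any thought is confirming that the lengths of the $\ell$ blocks exactly record the $|\MD|$-values of the $\ell$ newly produced trees, which is immediate once one reads off how $\alpha$ turns $|\MD|$ into cycle length. I do not anticipate any further obstacle.
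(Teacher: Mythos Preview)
Your proposal is correct and follows essentially the same approach as the paper: both note that the bijection from the proof of Theorem~\ref{thm:children} already respects the refined statistic, since $\alpha^{-1}$ applied to a cycle of $m$ minimally rooted trees yields a tree with $|\MD|=m$, so the $\ell$ block lengths in \eqref{eq:5} (which sum to $k$) become the $|\MD|$-values of the $\ell$ trees not containing $n$. Your write-up is in fact more detailed than the paper's, which compresses the whole argument into this single observation.
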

\begin{proof}
  This can be checked by the following observation.  Consider a cycle $C$ in
  \eqref{eq:5}, and $T=\alpha^{-1}(C)$. Then $|\MD(T)|$ is the number of trees
  in $C$. Thus the sum of $|\MD(T)|$ for all cycles $C$ in \eqref{eq:5} is $k$.
\end{proof}

\section*{Acknowledgement}
The author would like to thank Dennis Stanton for helpful discussion and
comments, especially for the idea in the proof using generating functions in
Section~\ref{sec:some-properties-tt_n}.


\end{document}